\theoremstyle{plain}
\DeclareMathOperator{\conv}{conv}
\DeclareMathOperator{\sgn}{sgn}
\newtheorem{theorem}{Theorem}
\newtheorem{cor}[theorem]{Corollary}
\newtheorem{prop}[theorem]{Proposition}
\newtheorem{remark}[theorem]{Remark}
\theoremstyle{definition}
\newtheorem{definition}[theorem]{Definition}
\begin{document}

\title[Symmetry of BJ Orthogonality and Geometry of $\mathbb{B}(\ell_\infty^n,\ell_1^m)$]{Point-wise Symmetry of Birkhoff-James Orthogonality and Geometry of $\mathbb{B}(\ell_\infty^n,\ell_1^m)$}

\author[Bose]{Babhrubahan Bose}
                \newcommand{\acr}{\newline\indent}
                
\subjclass[2020]{Primary 46B20, Secondary 46B28, 46A32}
\keywords{Birkhoff-James orthogonality; Extreme points; Smooth points; Left-symmetric points; Right-symmetric points; Grothendieck constants}

\address[Bose]{Department of Mathematics\\ Indian Institute of Science\\ Bengaluru 560012\\ Karnataka \\INDIA\\ }
\email{babhrubahanb@iisc.ac.in}

 \thanks{The research of Babhrubahan Bose is funded by PMRF research fellowship under the supervision of Professor Apoorva Khare and Professor Gadadhar Misra.}

\begin{abstract}
We study the relationship between the point-wise symmetry of Birkhoff-James orthogonality and the geometry of the space of operators $\mathbb{B}(\ell_\infty^n,\ell_1^m)$. We show that any non-zero left-symmetric point in this space is a smooth point. We also show that for $n\geq4$, any unit norm right-symmetric point of this space is an extreme point of the closed unit ball. This marks the first step towards characterizing the extreme points of these unit balls and finding the Grothendieck constants $G(m,n)$ using Birkhoff-James orthogonality techniques.
\end{abstract}

\maketitle   
   
\section*{Introduction}
In recent times, Birkhoff-James orthogonality and its point-wise symmetry has been used to understand the geometry of a normed linear space. Characterization of Birkhoff-James orthogonality and its local symmetry has been done for finite-dimensional $\ell_p$ spaces in \cite{CSS}, while that for the sequence spaces $\ell_p$ and the function spaces $L_p$ have been done in \cite{usseq}, \cite{me}. In these articles, these characterizations have been used to understand the geometry of the underlying spaces by describing the smooth points and the onto isometries of the spaces. In this article, we use this idea to establish a relationship between the point-wise symmetry of Birkhoff-James orthogonality and the geometry of the space of operators from $\ell_\infty^n$ into $\ell_1^m$ over $\mathbb{R}$ or $\mathbb{C}$, denoted by $\mathbb{B}(\ell_\infty^n,\ell_1^m)$. We show that any non-zero left-symmetric point of this space is a smooth point and any unit norm right-symmetric point of this space is an extreme point of the closed unit ball.\par
Recall that these extreme points play a crucial role in understanding the \textit{Grothendieck constant} for a given pair of natural numbers $(m,n)$, given by
\begin{align}\label{grothendieck}
    G^\mathbb{K}(m,n):=\sup_{\substack{\|T\|_{op}=1,\\
    \|x_i\|_2=\|y_j\|_2=1}}
    ~\left| \sum\limits_{i=1}^m \sum\limits_{j=1}^n a_{ij}\langle x_i,y_j\rangle\right|,
\end{align}
where $T=[a_{ij}]_{m\times n}\in\mathbb{B}(\ell_\infty^n,\ell_1^m)$ over $\mathbb{K}=\mathbb{R}$ or $\mathbb{C}$. The Grothendieck constant $G^\mathbb{K}$ for the field $\mathbb{K}$ can then be obtained as the supremum of all $G^\mathbb{K}(m,n)$, where $m$ and $n$ vary over the natural numbers.\par

The Grothendieck constant emerged from a celebrated result by Grothendieck \cite{groth}, which in effect says that $G^\mathbb{K}(m,n)$ (defined in \eqref{grothendieck}) is uniformly bounded over both $\mathbb{R}$ and $\mathbb{C}$. This theorem/inequality of Grothendieck, including determining the exact constant (or sharper bounds for it \cite{Krivine}), has been the focus of tremendous research in the past few decades, including in Banach space theory, C* algebra theory, operator theory, physics, and computer science (see e.g. the survey \cite{Khot}). We refer the reader to the comprehensive and authoritative survey by Pisier \cite{Pisier} and the memoir by Blei \cite{blei} for more on the Grothendieck inequality.

The goal of this short note is to approach the Grothendieck inequality via the framework of Birkhoff-James orthogonality. Since the right hand side of \eqref{grothendieck} is the supremum of a convex function taken over a convex set, the supremum is attained at the extreme points of the convex set, viz, the closed unit ball of $\mathbb{B}(\ell_\infty^n,\ell_1^m)$. Hence characterization of these extreme points, which we call the \textit{extreme contractions} (following \cite{iwa}), would allow us to find the Grothendieck constant for the given pair $(m,n)$.\par
In this spirit, the first step towards better bounding the Grothendieck constant would be to characterize the extreme contractions. This has been done for $m=1$, $n=1$ and $(m,n)=(2,2),(3,3)$ in \cite{gm} and \cite{lima}. In this note we focus on the case $n \geq 4$, with the ground field either $\mathbb{R}$ or $\mathbb{C}$, where our main result provides a class of extreme contractions:
\begin{theorem}\label{main}
Let $n\geq4$. If $T\in\mathbb{B}(\ell_\infty^n,\ell_1^m)$ over $\mathbb{R}$ or $\mathbb{C}$ is a right-symmetric point and has norm 1, then $T$ is an extreme contraction.
\end{theorem}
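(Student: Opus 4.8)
The plan is to translate both extremality and right-symmetry into the combinatorial language of the \emph{norming pairs} of $T$, and then to show that right-symmetry forces these pairs to be rich enough to make $T$ extreme. First I would fix notation for Birkhoff–James orthogonality, writing $A\perp_B C$ if $\|A+\lambda C\|\ge\|A\|$ for every scalar $\lambda$. For $T=[a_{ij}]\in\mathbb{B}(\ell_\infty^n,\ell_1^m)$ the operator norm is computed on the extreme points of the two balls, $\|T\|=\max_{\delta,\epsilon}\operatorname{Re}\langle\delta,T\epsilon\rangle$, where $\epsilon$ runs over the extreme points of $B_{\ell_\infty^n}$ and $\delta$ over the extreme points of $B_{(\ell_1^m)^*}=B_{\ell_\infty^m}$ (so $\epsilon\in\{\pm1\}^n$, $\delta\in\{\pm1\}^m$ in the real case, and unimodular vectors in the complex case). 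Calling such a maximizing $(\delta,\epsilon)$ a norming pair and letting $J(T)$ be the set of all of them, the standard subdifferential description of the operator norm yields the criterion $T\perp_B C\iff 0\in\conv\{\operatorname{Re}\langle\delta,C\epsilon\rangle:(\delta,\epsilon)\in J(T)\}$, which reduces orthogonality to a sign condition on the real numbers $\langle\delta,C\epsilon\rangle$.

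The second step is a reformulation of extremality. If $T$ is not extreme, then $T=\tfrac12(A+B)$ with $A\ne B$ in the unit ball, and $S:=\tfrac12(A-B)\ne0$ satisfies $\|T\pm S\|\le1$; evaluating at a norming pair, where $\langle\delta,T\epsilon\rangle=1$, forces $\operatorname{Re}\langle\delta,S\epsilon\rangle=0$ for every $(\delta,\epsilon)\in J(T)$. Conversely, any nonzero $S$ annihilated by all norming pairs gives $\|T\pm tS\|=1$ for small $t$, since on $J(T)$ the value stays $1$ while the finitely many non-norming pairs stay strictly below $1$ by continuity. Hence, for $\|T\|=1$, the contraction $T$ is extreme if and only if the rank-one functionals $\{\langle\delta,(\cdot)\epsilon\rangle:(\delta,\epsilon)\in J(T)\}$ span the dual of $\mathbb{B}(\ell_\infty^n,\ell_1^m)$, i.e.\ the norming pairs span the whole matrix space. (I would carry out this line in the real case and note that the complex case follows from the same subdifferential computation.) The theorem is thereby converted into the assertion that right-symmetry forces the norming pairs of $T$ to span.

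To prove the latter I would argue by contraposition. Assume $\|T\|=1$ and that the norming pairs fail to span, and take $S\ne0$ with $\langle\delta,S\epsilon\rangle=0$ on $J(T)$. The aim is to manufacture an operator $R$ with $R\perp_B T$ but $T\not\perp_B R$, contradicting right-symmetry. By the criterion above, $T\not\perp_B R$ holds precisely when all the numbers $\operatorname{Re}\langle\delta,R\epsilon\rangle$, $(\delta,\epsilon)\in J(T)$, share one strict sign, whereas $R\perp_B T$ requires the norming pairs of $R$ to include pairs on which $T$ takes values of both signs, so that $0$ lies in their convex hull. The construction I have in mind is $R=T+\Lambda\,\delta_0\epsilon_0^{*}$, where $(\delta_0,\epsilon_0)$ is a sign pair with $\operatorname{Re}\langle\delta_0,T\epsilon_0\rangle\le0$ chosen compatibly with $J(T)$ and $\Lambda$ is large: the large rank-one term relocates the norming set of $R$ onto $(\delta_0,\epsilon_0)$, where $T$ is non-positive, yielding $R\perp_B T$, while the additive $T$ keeps every $\operatorname{Re}\langle\delta,R\epsilon\rangle$ positive on $J(T)$, yielding $T\not\perp_B R$. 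Making this rigorous requires the compatibility $\operatorname{Re}\big(\langle\delta,\delta_0\rangle\langle\epsilon_0,\epsilon\rangle\big)\ge0$ for all $(\delta,\epsilon)\in J(T)$, and possibly supplementing $(\delta_0,\epsilon_0)$ by a second pair so that the $T$-values genuinely straddle $0$ rather than merely touch it.

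The main obstacle, and the place where the hypothesis $n\ge4$ must enter, is exactly the existence of such a compatible pair $(\delta_0,\epsilon_0)$: one has to show that the $2^n$ sign vectors $\epsilon\in\{\pm1\}^n$ are abundant enough that some pair points non-positively at $T$ while sitting in the correct sign-sector relative to every norming pair. I expect this to be a dimension-counting argument on the Hamming geometry of $\{\pm1\}^n$, which is too rigid for small $n$ (consistent with the exceptional behaviour recorded at $(m,n)=(2,2),(3,3)$) and becomes available once $n\ge4$. An alternative route worth recording is the adjoint identification $\mathbb{B}(\ell_\infty^n,\ell_1^m)\cong\mathbb{B}(\ell_\infty^m,\ell_1^n)$, under which right-symmetric points ought to correspond to left-symmetric ones; combined with the companion result that left-symmetric points are smooth, this might furnish a second proof, although transferring \emph{smooth} to \emph{extreme} across the adjoint is not automatic, so I would keep the direct construction as the primary argument.
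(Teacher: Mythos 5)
Your reduction of extremality to the spanning of the norming functionals $\{\operatorname{Re}\langle\delta,(\cdot)\epsilon\rangle:(\delta,\epsilon)\in J(T)\}$ is a reasonable reformulation (and the direction you actually need, namely that spanning implies extremality and that non-extremality yields a nonzero $S$ annihilated on $J(T)$, is sound in both the real and complex cases). But the proof stops exactly where the theorem begins. The entire content of the statement is the production, from a non-extreme $T$, of an operator $R$ with $R\perp_B T$ and $T\not\perp_B R$; in your scheme this amounts to exhibiting a pair $(\delta_0,\epsilon_0)$ satisfying simultaneously (i) the compatibility $\operatorname{Re}\bigl(\langle\delta,\delta_0\rangle\langle\epsilon_0,\epsilon\rangle\bigr)\ge0$ for \emph{every} $(\delta,\epsilon)\in J(T)$ and (ii) $\operatorname{Re}\langle\delta_0,T\epsilon_0\rangle=0$ (or a straddling pair of such). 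You explicitly defer this to an unspecified ``dimension-counting argument on the Hamming geometry of $\{\pm1\}^n$,'' and moreover the witness $S$ of non-spanning is introduced and then never used in the construction of $R$ --- so the contrapositive hypothesis is not actually exploited anywhere. Without a concrete derivation of $(\delta_0,\epsilon_0)$ from the failure of spanning, and without locating where $n\ge4$ enters, this is a plan rather than a proof.

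For comparison, the paper's argument supplies precisely the missing mechanism, by different means. It fixes the basis of extreme points $x_i=\sum_j e_j-2e_i$, identifies $\mathbb{B}(\ell_\infty^n,\ell_1^m)$ with $(\ell_1^m)_\infty^n$ via $T'\mapsto(T'x_1,\dots,T'x_n)$, and shows this map transports Birkhoff--James orthogonality in one direction. Non-extremality forces $T$ to miss some $x_i$ in its norm-attainment set; a separate result on right-symmetric points of $\mathbb{Y}_\infty^n$ (all coordinates must have full norm) then yields a tuple $y$, concentrated on the missing coordinate, with $y\perp_B\Gamma(T)$ and $\Gamma(T)\not\perp_B y$; and the hypothesis $n\ge4$ is used, via the expansion of arbitrary sign vectors in the $x_i$, to guarantee that $\Gamma^{-1}(y)$ attains norm only at $\pm x_n$, so that orthogonality pulls back. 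Your rank-one perturbation $R=T+\Lambda\,\delta_0\epsilon_0^{*}$ is in the same spirit as the paper's $\Gamma^{-1}(y)$ (both are operators norming essentially at a single extreme point where $T$ is deficient), but the paper's coordinates make the compatibility and the role of $n\ge4$ checkable, whereas your version leaves them as conjectures. A further caution: your intermediate target ``right-symmetric $\Rightarrow$ norming pairs span'' is strictly stronger than ``right-symmetric $\Rightarrow$ extreme'' over $\mathbb{C}$ (the space is not polyhedral there, and $J(T)$ is a continuum, which also undermines the ``finitely many non-norming pairs'' perturbation step), so even the framing needs adjustment in the complex case.
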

\par
For proving this result, we first study Birkhoff-James orthogonality in the Banach space $\ell_\infty^n\otimes\mathbb{X}$ for any Banach space $\mathbb{X}$. Using the results obtained therefrom, we prove the relationship between the point-wise symmetry of Birkhoff-James orthogonality and the geometry of the space of operators mentioned before. In the sections \ref{2} and \ref{3} we do these analyses for the real case, while the fourth and final section is dedicated towards generalizing the results obtained so far in case of Banach spaces over the complex field.

\section{Notations and terminologies}\label{1}
Let us establish the relevant notations and terminologies to be used throughout the article. For a Banach space $\mathbb{X}$, let $\mathbb{X}^*$ denote the continuous dual of it and define the support functional of a non-zero element $x\in\mathbb{X}$ to be any $f\in\mathbb{X}^*$ such that
\begin{align*}
    \|f\|=1,~ f(x)=\|x\|.
\end{align*}
A non-zero element $x\in\mathbb{X}$ is said to be smooth if it has a unique support functional. \par
Given two elements $x,y\in\mathbb{X}$, $x$ is defined to be \textit{Birkhoff-James orthogonal} to $y$ \cite{B}, denoted by $x\perp_By$ if 
\[\|x+\lambda y\|\geq\|x\|,~\textit{for every scalar}~\lambda.\]
James proved in \cite{james} that $x\perp_By$ if and only if $f(y)=0$ for some support functional $f$ of $x$. In the same article, he proved that a non-zero point $x\in\mathbb{X}$ is smooth if and only if Birkhoff-James orthogonality is right additive at $x$, i.e., for any $y,z\in\mathbb{X}$,
\begin{align*}
    x\perp_By,~x\perp_Bz~\Rightarrow~x\perp_B(y+z).
\end{align*}
James proved in \cite{james2} that in a normed linear space of dimension 3 or more, Birkhoff-James orthogonality is symmetric if and only if the space is an inner product space. However, the importance of studying the point-wise symmetry of Birkhoff-James orthogonality in describing the geometry of normed linear spaces has been illustrated in \cite[Theorem 2.11]{CSS}, \cite[Corollary 2.3.4]{Sain}. Let us recall the following definition in this context from \cite{Sain2}, which will play an important part in our present study.
\begin{definition}
An element $x$ of a normed linear space $\mathbb{X}$ is said to be \textit{left-symmetric} (\textit{resp. right-symmetric}) if 
\begin{align*}
    x\perp_By\;\Rightarrow\; y\perp_Bx~~(\textit{resp.~}y\perp_Bx\;\Rightarrow\;x\perp_By),
\end{align*}
for every $y\in \mathbb{X}$.
\end{definition}
Note that by the term \textit{point-wise symmetry of Birkhoff-James orthogonality}, we refer to the left-symmetric and the right-symmetric points of a given normed linear space.\par
A \textit{semi-inner product} on a real vector space $\mathbb{V}$ is defined to be a map $[\cdot,\cdot]:\mathbb{V}\times\mathbb{V}\to\mathbb{K}$ such that for $x,y,z\in\mathbb{V}$ and $\lambda\in\mathbb{K}$,
\begin{enumerate}
    \item $[x,x]\geq0$ with equality if and only if $x=0$.
    \item $[y,x]+\lambda[z,x]=[y+\lambda z,x]$.
    \item $[x,\lambda y]=\overline{\lambda} [x,y]$
    \item $|[x,x]|^2\leq[x,x][y,y]$.
\end{enumerate}
A \textit{semi-inner product} on a Banach space $\mathbb{X}$ is a map $[\cdot,\cdot]:\mathbb{X}\times\mathbb{X}\to\mathbb{K}$ satisfying the above four properties along with $[x,x]=\|x\|^2$ for every $x\in\mathbb{X}$. Construction of a semi-inner product on $\mathbb{X}$ requires a map $\Psi:\mathbb{K}\mathbb{P}\mathbb{X}\to S_{\mathbb{X}^*}$ such that $\Psi([x])$ is the support functional of some $x_0\in [x]\cap S_\mathbb{X}$, where $\mathbb{K}\mathbb{P}\mathbb{X}$ denotes the $\mathbb{K}$-projective space of $\mathbb{X}$ \footnote{Recall, this is the set of equivalence classes in $\mathbb{X}$ under the relation of multiplication by nonzero scalars in $\mathbb{K}$ -- i.e., the set of lines in $\mathbb{X}$.} and $[x]$ denotes the equivalence class of $x\in\mathbb{X}\setminus \{0\}$ in $\mathbb{K}\mathbb{P}\mathbb{X}$. Note that the element $x_0$ is uniquely determined by the map $\Psi$ and the element $x\in\mathbb{X}\setminus \{0\}$. The semi-inner product can then be constructed as
\begin{align*}
    [y,x]:=\overline{\lambda}\left(\Psi([x])\right)(y);~x=\lambda x_0,~(\Psi([x]))(x_0)=1,~x,y\in\mathbb{X}.
\end{align*}
Note that a non-zero point $x\in\mathbb{X}$ is smooth if and only if 
\begin{align*}
    [y,x]_1=[y,x]_2,~\textit{for every}~y\in\mathbb{X},
\end{align*}
for every pair $[\cdot,\cdot]_1$, $[\cdot,\cdot]_2$ of semi-inner products on $\mathbb{X}$. Also, for $x,y\in\mathbb{X}$, $x\perp_By$ if and only if $[y,x]=0$ for some semi-inner product $[\cdot,\cdot]$ on $\mathbb{X}$.\par
Let us denote the space $\ell_\infty^n\otimes\mathbb{X}$ by $\mathbb{X}_\infty^n$. Then $\mathbb{X}_\infty^n$ is the vector space of all $\mathbb{X}$-valued sequences of length $n$ with the norm defined as
\begin{align*}
    \|(x_1,x_2,\dots,x_n)\|:=\max\limits_{1\leq i\leq n}\|x_i\|,~x_i\in\mathbb{X}.
\end{align*}
Also, denote $\ell_1^n\otimes\mathbb{X}$ by $\mathbb{X}_1^n$, i.e., 
\begin{align*}
    \mathbb{X}_1^n:=\{(x_1,x_2,\dots,x_n):x_i\in\mathbb{X}\},~\|(x_1,x_2,\dots,x_n)\|:=\sum\limits_{i=1}^n\|x_i\|,~x_i\in\mathbb{X}.
\end{align*}
The Banach space of all bounded linear operators between two Banach spaces $\mathbb{X}$ and $\mathbb{Y}$ equipped with the operator norm is denoted by $\mathbb{B}(\mathbb{X},\mathbb{Y})$. We call an extreme point of the closed unit ball of this space, \textit{extreme contraction}. Also, the extreme points of the closed unit ball of any Banach space $\mathbb{X}$ are denoted simply by \textit{extreme points of $\mathbb{X}$}. An operator $T\in\mathbb{B}(\mathbb{X},\mathbb{Y})$ is said to attain norm at $x\in\mathbb{X}$ if $\|x\|=1$ and $\|Tx\|=\|T\|$. The set of all points where an operator $T$ attains norm is denoted by $M_T$, i.e.,
\[M_T:=\{x\in\mathbb{X}:\|x\|=1,~\|Tx\|=\|T\|\}.\]

\section{Birkhoff-James orthogonality in $\mathbb{X}_\infty^n$}\label{2}
Throughout this section and the next one, all the spaces are to be considered over $\mathbb{R}$. Begin by observing that the dual of $\mathbb{X}_\infty^n$ is ${\mathbb{X}^*}_1^n$, with the functional $\Psi_{(f_1,f_2,\dots f_n)}$ corresponding to $(f_1,f_2,\dots,f_n)\in{\mathbb{X}^*}_1^n$ given by
\begin{align*}
    \Psi_{(f_1,f_2,\dots f_n)}(x_1,x_2,\dots,x_n):=\sum\limits_{i=1}^nf_i(x_i),~(x_1,x_2,\dots,x_n)\in\mathbb{X}_\infty^n.
\end{align*}
We now characterize the support functional of any non-zero element of $\mathbb{X}_\infty^n$. 
\begin{prop}\label{support}
Given $x=(x_1,x_2,\dots,x_n)\in\mathbb{X}_\infty^n$ non-zero, $(f_1,f_2,\dots,f_n)\in{\mathbb{X}^*}_1^n$ is a support functional of $x$ if and only if 
\begin{align*}
    f_i=
    \begin{cases}
    \lambda_ig_i,~\|x_i\|=\|x\|,\\
    0,~\|x_i\|<\|x\|,
    \end{cases}
\end{align*}
where $g_i$ is a support functional of $x_i$ for every $1\leq i\leq n$ and $\sum\limits_i\lambda_i=1$.
\end{prop}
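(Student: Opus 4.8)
The plan is to prove both implications by tracking equality in the standard chain of inequalities governing the duality pairing between $\mathbb{X}_\infty^n$ and ${\mathbb{X}^*}_1^n$. Throughout, write $M:=\|x\|=\max_{1\le i\le n}\|x_i\|$, which is strictly positive since $x$ is non-zero, and for a candidate $F=(f_1,\dots,f_n)$ recall from the stated description of the dual that $\|F\|=\sum_{i}\|f_i\|$ and $F(x)=\sum_i f_i(x_i)$. I will also use implicitly that $\lambda_i\ge 0$ in the proposed form (which the equality analysis will in fact force), so that $\sum_i\lambda_i=\sum_i|\lambda_i|$ is genuinely the dual norm.

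For the sufficiency direction I would substitute the prescribed form of $F$ directly. Since each $g_i$ is a support functional, $\|g_i\|=1$ and $g_i(x_i)=\|x_i\|$, so
\[
\|F\|=\sum_i\|f_i\|=\sum_{\|x_i\|=M}\lambda_i\|g_i\|=\sum_i\lambda_i=1,
\qquad
F(x)=\sum_{\|x_i\|=M}\lambda_i g_i(x_i)=M\sum_i\lambda_i=M=\|x\|,
\]
which is exactly the statement that $F$ is a support functional of $x$.

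For the necessity direction, suppose $F=(f_1,\dots,f_n)$ is a support functional. I would write the chain
\[
\|x\|=F(x)=\sum_i f_i(x_i)\le\sum_i|f_i(x_i)|\le\sum_i\|f_i\|\,\|x_i\|\le M\sum_i\|f_i\|=M\|F\|=M=\|x\|.
\]
Since the two extremes agree, each inequality is an equality. The final inequality gives $\|f_i\|\,(M-\|x_i\|)=0$ for every $i$, forcing $f_i=0$ whenever $\|x_i\|<M$. The middle inequality gives $|f_i(x_i)|=\|f_i\|\,\|x_i\|$, and the first gives $f_i(x_i)\ge 0$; hence for each index with $f_i\neq 0$ the normalized functional $g_i:=f_i/\|f_i\|$ satisfies $g_i(x_i)=\|x_i\|$ with $\|g_i\|=1$, i.e. $g_i$ is a support functional of $x_i$, and we set $\lambda_i:=\|f_i\|>0$. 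For the remaining indices, where $f_i=0$, we set $\lambda_i=0$ and choose any support functional $g_i$ of $x_i$ (which exists by Hahn--Banach when $x_i\neq 0$). Finally $\sum_i\lambda_i=\sum_i\|f_i\|=\|F\|=1$.

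The computation is routine; the only point requiring care is the bookkeeping of the equality conditions. In particular, one must observe that equality in $\sum_i f_i(x_i)\le\sum_i|f_i(x_i)|$ is what pins down the nonnegativity of each $\lambda_i$, so that the constraint $\sum_i\lambda_i=1$ coincides with $\|F\|=1$, and one must treat the degenerate indices where $\|x_i\|=M$ yet $f_i=0$ separately, invoking Hahn--Banach to supply the auxiliary support functionals $g_i$ there.
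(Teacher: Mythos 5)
Your proposal is correct and follows essentially the same route as the paper: direct substitution for sufficiency, and for necessity the chain $\|x\|=\sum_i f_i(x_i)\le\sum_i\|f_i\|\|x_i\|\le\|x\|\sum_i\|f_i\|$ with equality analysis forcing $f_i=0$ where $\|x_i\|<\|x\|$ and $f_i=\|f_i\|g_i$ elsewhere. Your extra care about nonnegativity of the $\lambda_i$ and the degenerate indices with $f_i=0$ is a harmless refinement of the same argument.
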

\begin{proof}
To prove the sufficiency, note that if $(f_1,f_2,\dots,f_n)$ satisfies the given condition, then
\begin{align*}
    \|(f_1,f_2,\dots,f_n)\|=\sum\limits_{i=1}^n\|f_i\|=\sum\limits_{i=1}^n\lambda_i\|g_i\|=\sum\limits_{i=1}^n\lambda_i=1,
\end{align*}
and
\begin{align*}
    \Psi_{(f_1,f_2,\dots,f_n)}(x_1,x_2,\dots,x_n)=\sum\limits_{i=1}^n\lambda_ig_i(x_i)=\sum\limits_{\|x_i\|=\|x\|}\lambda_ig_i(x_i)=\sum\limits_{\|x_i\|=\|x\|}\lambda_i\|x_i\|=\|x\|,
\end{align*}
since $\lambda_i=0$ if $\|x_i\|\neq\|x\|$.\par
For the necessity, note that if $\Psi_{(f_1,f_2,\dots,f_n)}$ is a support functional of $(x_1,x_2,\dots,x_n)$, then 
\begin{align*}
    \|x\|=\sum\limits_{i=1}^nf_i(x_i)\leq\sum\limits_{i=1}^n\|f_i\|\|x_i\|\leq \sum\limits_{i=1}^n\|f_i\|\|x\|=\|x\|\|(f_1,f_2,\dots,f_n)\|.
\end{align*}
Since $\|(f_1,f_2,\dots,f_n)\|=1$, equality must hold in all the inequalities giving $f_i=0$ if $\|x_i\|<\|x\|$ and $f_i(x_i)=\|f_i\|\|x_i\|$ if $\|x_i\|=\|x\|$. Hence, $f_i=\lambda_ig_i$ for some support functional $g_i$ of $x_i$ and $\lambda_i\geq0$ if $\|x_i\|=\|x\|$. Finally,
\begin{align*}
    \sum\limits_{i=1}^n\lambda_i=\sum\limits_{\|x_i\|=\|x\|}\lambda_i=\sum\limits_{\|x_i\|=\|x\|}\lambda_i\|g_i\|=\sum\limits_{\|x_i\|=\|x\|}\|f_i\|=\sum\limits_{i=1}^n\|f_i\|=1.
\end{align*}
\end{proof}
\begin{remark}
If $\mathbb{X}=\mathbb{R}$, then $\mathbb{X}_\infty^n$ is in fact $\ell_\infty^n$, where the support functional of a non-zero $(x_1,x_2,\dots,x_n)\in\ell_\infty^n$ is characterized by any convex combination of $\{\overline{\sgn(x_i)}e_i:|x_i|=\|x\|\}$ as can be verified both from Proposition \ref{support} and direct computations.
\end{remark}
We are now ready to characterize Birkhoff-James orthogonality, smoothness and right-symmetry in $\mathbb{X}_\infty^n$. It should be mentioned that for $\mathbb{X}=\mathbb{R}$ case, these results give the corresponding characterizations for $\ell_\infty^n$ and elementary computations would reveal that our results indeed conform with the characterizations in \cite{CSS} by Chattopadhyay, Sain and Senapati. \par
We begin with characterizing Birkhoff-James orthogonaliy in $\mathbb{X}_\infty^n$.
\begin{theorem}\label{ortho}
Given two elements $x=(x_1,x_2,\dots,x_n)$ and $y=(y_1,y_2,\dots,y_n)$ of $\mathbb{X}_\infty^n$, $x\perp_By$ if and only if either of the following two conditions holds
\begin{enumerate}
    \item $x_i\perp y_i$ for some $1\leq i\leq n$ such that $\|x_i\|=\|x\|$.
    \item $[y_i,x_i]_1[y_j,x_j]_2<0$ for some $1\leq i<j\leq n$ and $[\cdot,\cdot]_1,[\cdot,\cdot]_2$ two semi-inner products on $\mathbb{X}$, where $\|x_i\|=\|x_j\|=\|x\|$.
\end{enumerate}
\end{theorem}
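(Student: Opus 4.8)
The plan is to reduce everything to James' characterization of Birkhoff-James orthogonality together with the description of support functionals of $\mathbb{X}_\infty^n$ obtained in Proposition \ref{support}, working throughout over $\mathbb{R}$. Write $S:=\{i:\|x_i\|=\|x\|\}$ for the set of coordinates attaining the norm. By James' theorem $x\perp_B y$ holds precisely when some support functional of $x$ annihilates $y$, and by Proposition \ref{support} such functionals are exactly $(f_1,\dots,f_n)$ with $f_i=\lambda_i g_i$ for $i\in S$ (and $f_i=0$ otherwise), where $g_i$ is a support functional of $x_i$, $\lambda_i\geq0$, and $\sum_{i\in S}\lambda_i=1$. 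So the first step is simply to record that $x\perp_B y$ if and only if there is a convex weight $(\lambda_i)_{i\in S}$ and support functionals $g_i$ of $x_i$ with $\sum_{i\in S}\lambda_i g_i(y_i)=0$.

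The second step is to understand, for a fixed $i\in S$, the set of scalars $g_i(y_i)$ as $g_i$ runs over all support functionals of $x_i$. Since these form a convex, weak-$*$ compact subset of $S_{\mathbb{X}^*}$ and $g\mapsto g(y_i)$ is weak-$*$ continuous and affine, the set is a compact interval $[m_i,M_i]$. Two facts then link this interval to the statement: by James' theorem $x_i\perp_B y_i$ if and only if $0\in[m_i,M_i]$; and as $[\cdot,\cdot]$ ranges over all semi-inner products on $\mathbb{X}$, the quantity $[y_i,x_i]=\|x\|\,g_i(y_i)$ ranges over $\|x\|[m_i,M_i]$, with the choices at distinct indices independent once we are permitted to use different semi-inner products. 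This independence is exactly why condition (2) is phrased with two semi-inner products, and it is the one point that must be argued with care, in particular when $x_i$ and $x_j$ are parallel. Consequently $[y_i,x_i]_1[y_j,x_j]_2<0$ is achievable if and only if the signs of elements of $[m_i,M_i]$ and $[m_j,M_j]$ can be made opposite, i.e. $(M_i>0,\ m_j<0)$ or $(m_i<0,\ M_j>0)$.

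The third step assembles the intervals. For fixed weights the attainable values of $\sum_{i\in S}\lambda_i g_i(y_i)$ form the Minkowski sum $\big[\sum_{i\in S}\lambda_i m_i,\ \sum_{i\in S}\lambda_i M_i\big]$, and letting the weights vary over the simplex the union of these intervals is the convex hull of $\bigcup_{i\in S}[m_i,M_i]$, which on the line is simply $\big[\min_{i\in S} m_i,\ \max_{i\in S} M_i\big]$. Hence $x\perp_B y$ if and only if $\min_{i\in S} m_i\le 0\le \max_{i\in S} M_i$; that is, some coordinate interval reaches to or below $0$ and some reaches to or above $0$.

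The final step is a short case analysis identifying this clean condition with ``(1) or (2)''. If a single index $i\in S$ satisfies $m_i\le 0\le M_i$ we are in case (1); otherwise the index $i_0$ realizing $m_{i_0}\le 0$ and the index $j_0$ realizing $M_{j_0}\ge 0$ are distinct, and examining the positions of $M_{i_0}$ and $m_{j_0}$ either returns us to case (1) (whenever a boundary value $0$ lands inside some interval) or produces the strict opposite-sign configuration of case (2). I expect the main obstacle to be bookkeeping rather than any single computation: justifying that the per-coordinate range $\{g_i(y_i)\}$ is a genuine interval and that across indices these ranges can be realized simultaneously and independently by semi-inner products (the two-semi-inner-product subtlety), and then matching the resulting non-strict inequalities against the strict product inequality in (2) so that the degenerate cases are correctly absorbed into (1).
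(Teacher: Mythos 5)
Your proposal is correct and follows essentially the same route as the paper: both reduce the problem via James' theorem and Proposition \ref{support} to the existence of convex weights $\lambda_i$ and support functionals $g_i$ of the norming coordinates with $\sum_i\lambda_i g_i(y_i)=0$, and then observe that this forces either a vanishing term (condition (1)) or two terms of opposite sign (condition (2)). The only difference is cosmetic: the paper draws the dichotomy directly from the vanishing convex combination, whereas you first compute the full attainable range $[\min_i m_i,\max_i M_i]$ and then case-analyze; your interval argument is sound (and is essentially the same device the paper deploys later in the proof of Theorem \ref{rightsym}).
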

\begin{proof}
We begin with the sufficiency. If condition 1 holds then by James' characterization of Birkhoff-James orthogonality, there exists a support functional $f$ of $x_i$ such that $f(y_i)=0$. Define $\Gamma:\mathbb{X}_\infty^n\to\mathbb{R}$ given by
\begin{align*}
    \Gamma((z_1,z_2,\dots,z_n)):=f(z_i),~(z_1,z_2,\dots,z_n)\in\mathbb{X}_\infty^n.
\end{align*}
Then by Proposition \ref{support}, $\Gamma$ is a support functional of $x$ and $\Gamma(y)=0$ giving $x\perp_By$. If condition 2 holds, then define $f_i,f_j:\mathbb{X}\to\mathbb{R}$ given by
\begin{align*}
    f_i(z):=[z,x_i]_1,~f_j(z):=[z,x_j]_2,~z\in\mathbb{X}.
\end{align*}
Then $f_i$ and $f_j$ are support functionals of $x_i$ and $x_j$ respectively. Further, since $f_i(y_i)$ and $f_j(y_j)$ are of opposite signs, there exists $\lambda\in(0,1)$ such that $\lambda f_i(y_i)+(1-\lambda)f_j(y_j)=0$. Define $\Gamma':\mathbb{X}_\infty^n\to\mathbb{R}$ as
\begin{align*}
    \Gamma'((z_1,z_2,\dots,z_n)):=\lambda f_i(z_i)+(1-\lambda)f_j(z_j).
\end{align*}
Then again by Proposition \ref{support}, $\Gamma'$ is a support functional of $x$ and $\Gamma'(y)=0$ giving $x\perp_By$.
\par
For proving the necessity, consider $(f_1,f_2,\dots,f_n)\in{\mathbb{X}^*}_1^n$ such that $\Psi_{(f_1,f_2,\dots,f_n)}$ is a support functional of $x$ and $\Psi_{(f_1,f_2,\dots,f_n)}(y)=0$. By Proposition \ref{support}, there exists $\lambda_i\in[0,1]$ and $g_i$ a support functional of $x_i$ such that
\begin{align*}
    \sum\limits_{\|x_i\|=\|x\|}\lambda_ig_i(y_i)=0.
\end{align*}
Therefore, $g_i(y_i)=0$ for some $1\leq i\leq n$, i.e., $x_i\perp_By_i$ or $g_i(y_i)$ and $g_j(y_j)$ are of opposite signs for some $1\leq i<j\leq n$ giving $g_i(y_i)g_j(y_j)<0$. Finding semi-inner products $[\cdot,\cdot]_1$ and $[\cdot,\cdot]_2$ of $\mathbb{X}$ such that 
\begin{align*}
    [z,x_i]=g_i(z)~\textit{and}~[z,x_j]=g_j(z),~\textit{for every}~z\in\mathbb{X},
\end{align*}
we get condition 2.
\end{proof}
This result allows us to characterize smoothness in this space. 
\begin{cor}\label{smoothinf}
A point $x=(x_1,x_2,\dots,x_n)\in\mathbb{X}_\infty^n$ is smooth if and only if there exists a unique $1\leq i\leq n $ such that $\|x_i\|=\|x\|$ and if $x_i$ is smooth.
\end{cor}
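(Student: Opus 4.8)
The plan is to read off smoothness directly from the parametrization of support functionals in Proposition~\ref{support}, since a non-zero point is smooth exactly when its support functional is unique. Throughout, set $I:=\{i:\|x_i\|=\|x\|\}$ for the (nonempty) set of coordinates attaining the maximal norm. By Proposition~\ref{support}, every support functional of $x$ has the form $(f_1,\dots,f_n)$ with $f_i=\lambda_i g_i$ for $i\in I$ and $f_i=0$ for $i\notin I$, where each $g_i$ is a support functional of $x_i$ and the weights $\lambda_i\ge 0$ satisfy $\sum_{i\in I}\lambda_i=1$. So the task reduces to deciding exactly when this family collapses to a single element.

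For the necessity direction I would argue contrapositively in two stages. First, suppose $|I|\ge 2$ and fix distinct $i,j\in I$ together with support functionals $g_i,g_j$ of $x_i,x_j$. Putting all the weight on $i$ (i.e. $\lambda_i=1$) produces a support functional whose $i$-th coordinate is $g_i\ne 0$ and whose $j$-th coordinate vanishes, while putting all the weight on $j$ produces one with the roles reversed; since support functionals have norm $1$ and hence are nonzero, these two are distinct, so $x$ is not smooth. Second, suppose $I=\{i_0\}$ but $x_{i_0}$ is not smooth; then $x_{i_0}$ admits two distinct support functionals $g_{i_0}\ne g_{i_0}'$, and embedding each into the $i_0$-th slot (with $\lambda_{i_0}=1$, all other coordinates zero) yields two distinct support functionals of $x$. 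Hence a smooth $x$ forces $|I|=1$ together with a smooth maximizing component.

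For the sufficiency direction, assume $I=\{i_0\}$ and $x_{i_0}$ smooth. By Proposition~\ref{support} any support functional of $x$ must kill every coordinate $i\ne i_0$ and, because the single weight $\lambda_{i_0}$ is forced to equal $1$, its $i_0$-th coordinate is exactly a support functional $g_{i_0}$ of $x_{i_0}$. Smoothness of $x_{i_0}$ pins down $g_{i_0}$ uniquely, so $x$ has a unique support functional and is therefore smooth.

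The argument is essentially bookkeeping on top of Proposition~\ref{support}, so there is no deep obstacle; the one point requiring a little care is the first stage above, where one must be sure that reallocating the convex weights $\lambda_i$ among two maximizing coordinates genuinely changes the functional. This is guaranteed precisely because each $g_i$ has norm one (so is nonzero) and the indices $i\in I$ occupy disjoint slots, which prevents the nonzero contributions from cancelling. (Alternatively, one could phrase the same dichotomy through James' right-additivity criterion using Theorem~\ref{ortho}, but the support-functional count is the most economical route.)
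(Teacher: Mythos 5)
Your proof is correct. The necessity half is the same as the paper's: both arguments read off from Proposition~\ref{support} that either two maximizing coordinates or a non-smooth maximizing coordinate produce at least two distinct support functionals of $x$. Where you genuinely diverge is the sufficiency half. The paper does not count support functionals there; instead it invokes Theorem~\ref{ortho} to show that when $I=\{i_0\}$ one has $x\perp_B y$ if and only if $x_{i_0}\perp_B y_{i_0}$, and then concludes smoothness via James' criterion that a point is smooth exactly when Birkhoff--James orthogonality is right-additive at it. You bypass both of those inputs and argue directly from the definition: Proposition~\ref{support} forces $\lambda_{i_0}=1$ and all other coordinates to vanish, so the unique support functional of $x_{i_0}$ yields the unique support functional of $x$. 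Your route is more economical (it needs only Proposition~\ref{support} and the definition of smoothness, not Theorem~\ref{ortho}), and it also makes the two directions of the proof symmetric, since both become bookkeeping on the same parametrization; the paper's route has the mild advantage of illustrating how the orthogonality characterization of Theorem~\ref{ortho} is deployed, a pattern reused later in the paper. Your closing caveat --- that reallocating the weights between two slots genuinely changes the functional because each $g_i$ has norm one and the slots are disjoint --- is exactly the point that needs checking, and you handle it correctly.
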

\begin{proof}
For the necessity, note that if $\|x_i\|=\|x_j\|=\|x\|$ for some $1\leq i<j\leq n$, then by Proposition \ref{support}, we can find more than one support functional of $x$. Also if $\|x_i\|=\|x\|$ and $x_i$ is not smooth, then again by Proposition \ref{support}, we can find more than one support functional of $x$.\par
For the sufficiency, note that if the aforesaid condition holds, then by Theorem \ref{ortho}, $x\perp_By$ for some $y=(y_1,y_2,\dots,y_n)$ if and only if $x_i\perp_By_i$, where $\|x_i\|=\|x\|$. Hence clearly, for $y=(y_1,y_2,\dots,y_n),~z=(z_1,z_2,\dots,z_n)$, 
\begin{align*}
    x\perp_By,~x\perp_Bz\Leftrightarrow x_i\perp_By_i,~x_i\perp_Bz_i\Rightarrow x_i\perp_B(y_i+z_i)\Leftrightarrow x\perp_B(y+z).
\end{align*}
\end{proof}
We now prove a necessary condition for an element of $\mathbb{X}_\infty^n$ to be right-symmetric.
\begin{theorem}\label{rightsym}
If $x=(x_1,x_2,\dots,x_n)\in\mathbb{X}_\infty^n$ is a right-symmetric point of $\mathbb{X}_\infty^n$, then $\|x_i\|=\|x\|$ for every $1\leq i\leq n$.
\end{theorem}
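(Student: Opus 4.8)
The plan is to argue by contraposition: assuming that some coordinate satisfies $\|x_k\|<\|x\|$, I will manufacture a single element $y$ with $y\perp_B x$ but $x\not\perp_B y$, which directly contradicts the defining implication $y\perp_B x\Rightarrow x\perp_B y$ of a right-symmetric point. After normalizing so that $\|x\|=1$, write $S:=\{i:\|x_i\|=1\}$ for the (nonempty) set of coordinates at which $x$ attains its norm; by assumption $k\notin S$. All the verifications below are routed through Theorem \ref{ortho} and Proposition \ref{support}.

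The candidate I would use is $y=(y_1,\dots,y_n)$ defined by $y_i:=x_i$ for $i\in S$, by $y_k:=-x_k/\|x_k\|$ when $x_k\neq 0$ (and any fixed unit vector when $x_k=0$), and by $y_i:=0$ otherwise. Then $\|y_i\|=1$ precisely on $S\cup\{k\}$, so $\|y\|=1$ and $S\cup\{k\}$ is exactly the norming set of $y$. Checking $x\not\perp_B y$ via Theorem \ref{ortho} is immediate from the choice $y_i=x_i$ on $S$: for each $i\in S$ the vector $x_i\neq 0$ is not Birkhoff-James orthogonal to itself, so condition (1) fails on every norming coordinate of $x$; and since $[y_i,x_i]=[x_i,x_i]=\|x_i\|^2>0$ for every semi-inner product, no product $[y_i,x_i]_1[y_j,x_j]_2$ can be negative, so condition (2) fails as well.

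The crux is to produce a support functional of $y$ that annihilates $x$, thereby witnessing $y\perp_B x$. By Proposition \ref{support} every support functional of $y$ is $\Psi_{(\mu_i g_i)}$ for support functionals $g_i$ of $y_i$ and weights $\mu_i\geq 0$ supported on $S\cup\{k\}$ with $\sum_i\mu_i=1$. Here $g_i(x_i)=\|x_i\|=1$ for $i\in S$, while the support functional $g_k$ of $y_k=-x_k/\|x_k\|$ gives $g_k(x_k)=-\|x_k\|$. Evaluating at $x$ returns $(1-\mu_k)-\mu_k\|x_k\|$, so I would take $\mu_k:=1/(1+\|x_k\|)$ and distribute the remaining mass $1-\mu_k$ arbitrarily over $S$; this choice is admissible exactly because $\|x_k\|<1$ forces $\mu_k\in(1/2,1)$, and it makes the functional vanish at $x$. (When $x_k=0$ the same scheme works with $\mu_k=1$, since then $g_k(x_k)=0$ and $S$ carries no weight.) This yields $y\perp_B x$ and completes the contradiction.

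The main obstacle is precisely this last step, and it is where the temptation of an easier construction breaks down: concentrating all of $y$'s norm at the single coordinate $k$ would reduce $y\perp_B x$ to $y_k\perp_B x_k$ with $y_k$ large, which is impossible already for $\mathbb{X}=\mathbb{R}$ and $x_k\neq 0$. The fix is to let $y$ attain its norm on all of $S$ as well, and then exploit the convex freedom in the weights $\mu_i$ of Proposition \ref{support} to balance the uniformly positive contributions coming from $S$ against the single negative contribution engineered at the deficient coordinate $k$. Keeping $y_i=x_i$ on $S$ is what simultaneously guarantees $x\not\perp_B y$, so the two requirements on $y$ do not conflict.
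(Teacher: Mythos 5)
Your proof is correct, and it takes a genuinely different route from the paper's. The paper isolates the deficient coordinate: it builds a witness $y$ whose norm is attained \emph{only} at the coordinate $n$ with $\|x_n\|<\|x\|$, chooses $y_n\perp_B x_n$ there (so $y\perp_B x$ follows from condition (1) of Theorem \ref{ortho}), and then needs a topological argument -- weak* compactness and connectedness of the set $I(w)=\{f(w):f\in J(z)\}$ -- to select the remaining coordinates $y_i$ so that $[y_i,x_i]>0$ for \emph{every} semi-inner product, which is what rules out $x\perp_B y$. You instead copy $x$ on its norming set $S$, reverse direction at the deficient coordinate $k$, and obtain $y\perp_B x$ by balancing the convex weights in Proposition \ref{support} (equivalently, via condition (2) of Theorem \ref{ortho}, since $[y_i,x_i]=1>0$ on $S$ while $[y_k,x_k]=-\|x_k\|\le 0$ at $k$); the choice $y_i=x_i$ on $S$ makes $x\not\perp_B y$ immediate. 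Your construction buys two things: it needs no compactness/connectedness lemma, and it works even when $\dim\mathbb{X}=1$ and $x_k\neq 0$, where the paper's prescription ``$y_k\perp_B x_k$ with $y_k$ the unique norming coordinate'' admits no nonzero solution. What it gives up is the extra precision recorded in Remark \ref{choice} -- a witness whose norm is concentrated at the single deficient coordinate with all other coordinates arbitrarily small -- which the paper extracts from its construction and uses later in the proof of Theorem \ref{main}; your $y$ attains its norm on all of $S\cup\{k\}$ and so does not directly yield that refinement. One small point of care: when $x_k=0$ your weight $\mu_k=1$ puts no mass on $S$, which is fine, but you should note that the case $x=0$ (where $S$ would be empty) is vacuous for the theorem, as you implicitly do by calling $S$ nonempty.
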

\begin{proof}
For the sake of contradiction, let $x$ be a right-symmetric point of $\mathbb{X}_\infty^n$ and without loss of generality, assume that $\|x_n\|<\|x\|$. Consider $y=(y_1,y_2,\dots,y_n)\in\mathbb{X}_\infty^n$ such that $\|y_i\|<\|y\|$ for every $i<n$ and $y_n\perp_Bx_n$. Now, observe that for any $z\in\mathbb{X}$ non-zero,
\begin{align*}
    J(z):=\{f\in\mathbb{X}^*:\|f\|=1,~f(z)=\|z\|\},
\end{align*}
is a convex set and therefore is connected. Further, $J(z)$ is a closed subset of the closed unit ball of $\mathbb{X}^*$ under the weak* topology and hence is weak* compact by the Banach-Alaoglu theorem \cite{BAT}. Therefore,
\begin{align*}
    I(w):=\{f(w):f\in J(z)\},~w\in\mathbb{X},
\end{align*}
is a compact connected subset of $\mathbb{R}$ and therefore is a finite closed interval. Hence if $z\not\perp_Bw$, then by James' characterization, $I(w)$ is contained either in the positive real line or in the negative real line. Thus, if $z\not\perp_Bw$, then $[z,w]$ does not change sign as $[\cdot,\cdot]$ varies over all the semi-inner products on $\mathbb{X}$.\par
Therefore, we can choose $y_i\in\mathbb{X}$ such that $[y_i,x_i]>0$ for every $i<n$ and every semi-inner product $[\cdot,\cdot]$ on $\mathbb{X}$. For this $y$, we get by Theorem \ref{ortho}, $y\perp_Bx$ and $x\not\perp_By$ contradicting the right-symmetry of $x$.
\end{proof}
The following remark will be required later.
\begin{remark}\label{choice}
If $x=(x_1,x_2,\dots,x_n)\in\mathbb{X}_\infty^n$ is such that $\|x_i\|<\|x\|$, then given any $\epsilon>0$, we can find $y=(y_1,y_2,\dots,y_n)\in\mathbb{X}_\infty^n$ satisfying
\begin{align*}
    y\perp_Bx~\textit{and}~x\not\perp_By,
\end{align*}
such that $\|y_i\|=\|y\|$ and $\|y_j\|<\epsilon$ for every $j\neq i$.
\end{remark}

\section{Point-wise symmetry of Birkhoff-James orthogonality and the geometry of $\mathbb{B}(\ell_\infty^n,\ell_1^m)$}\label{3}
In this section, we show that any non-zero left-symmetric point of $\mathbb{B}(\ell_\infty^n,\ell_1^m)$ over $\mathbb{R}$ is smooth and any unit norm right-symmetric point of the space is an extreme point of the closed unit ball. We begin by characterizing the smooth points of the space of operators between two finite-dimensional Banach spaces.
\begin{theorem}\label{smooth}
Given finite-dimensional real Banach spaces $\mathbb{X}$ and $\mathbb{Y}$, an operator $T\in\mathbb{B}(\mathbb{X},\mathbb{Y})$ is smooth if and only if $M_T=\{x_0,-x_0\}$ for some $x_0\in\mathbb{X}$ and $Tx_0$ is a smooth point of $\mathbb{Y}$.
\end{theorem}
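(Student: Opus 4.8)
The plan is to reduce everything to a description of the support functionals of $T$, since by definition $T$ is smooth precisely when it has a unique support functional. For each $x\in M_T$ and each support functional $g$ of $Tx$ in $\mathbb{Y}^*$, I would introduce the functional $\phi_{x,g}\in\mathbb{B}(\mathbb{X},\mathbb{Y})^*$ defined by $\phi_{x,g}(A):=g(Ax)$. The one-line estimate $|g(Ax)|\le\|g\|\,\|Ax\|\le\|A\|$ together with $\phi_{x,g}(T)=g(Tx)=\|Tx\|=\|T\|$ shows that $\phi_{x,g}$ is indeed a support functional of $T$. Denoting by $F_T$ the set of all support functionals of $T$, one checks immediately that $F_T$ is a compact convex subset of the dual unit ball (as $\mathbb{B}(\mathbb{X},\mathbb{Y})$ is finite-dimensional), and in fact a face of it: if a support functional of $T$ is a proper convex combination of two norm-one functionals, both summands must already attain $\|T\|$ at $T$.

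The key step -- and the main obstacle -- is to prove that the extreme points of $F_T$ are exactly the rank-one functionals $\phi_{x,g}$ with $x\in M_T$ and $g$ a support functional of $Tx$. For this I would use the tensor product identification $\mathbb{B}(\mathbb{X},\mathbb{Y})=\mathbb{X}^*\otimes_\epsilon\mathbb{Y}$ (the operator norm being the injective norm), whose dual is the projective tensor product $\mathbb{X}\otimes_\pi\mathbb{Y}^*$, the elementary tensor $x\otimes g$ corresponding precisely to $\phi_{x,g}$. By definition of the projective norm, the dual unit ball is the closed convex hull of $\{x\otimes g:\|x\|\le1,\ \|g\|\le1\}$, so Milman's theorem (the partial converse to Krein--Milman) forces every extreme point of the dual ball to be an elementary tensor $x\otimes g$ with $\|x\|=\|g\|=1$. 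Since $F_T$ is a face, its extreme points are extreme points of the whole dual ball that lie in $F_T$; and $\phi_{x,g}\in F_T$ with $\|x\|=\|g\|=1$ forces $g(Tx)=\|T\|$, i.e. $x\in M_T$ and $g$ a support functional of $Tx$. This yields exactly the claimed description.

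With this description in hand the theorem follows from Krein--Milman in finite dimensions, namely $F_T=\conv(\mathrm{ext}\,F_T)$, so that $T$ is smooth iff $F_T$ has a single extreme point. For necessity, if $F_T=\{\phi_{x_0,g_0}\}$ then any other $x_1\in M_T$ would yield an extreme point $\phi_{x_1,g_1}$; equality of the elementary tensors $x_1\otimes g_1=x_0\otimes g_0$ forces $x_1=\pm x_0$, so $M_T=\{x_0,-x_0\}$, and likewise two distinct support functionals of $Tx_0$ would produce two distinct extreme points, so $Tx_0$ must be smooth. For sufficiency, when $M_T=\{x_0,-x_0\}$ and $Tx_0$ is smooth with support functional $g_0$, the only candidate extreme points are $\phi_{x_0,g_0}$ and $\phi_{-x_0,-g_0}$; the required sign bookkeeping $\phi_{-x_0,-g_0}(A)=(-g_0)(A(-x_0))=g_0(Ax_0)=\phi_{x_0,g_0}(A)$ (using that $-g_0$ is the unique support functional of $T(-x_0)=-Tx_0$) shows these coincide, so $F_T$ has a unique extreme point and $T$ is smooth. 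The one routine point to keep track of throughout is the identification ``equal elementary tensors $\iff$ proportional vectors,'' which is what converts uniqueness of the support functional into the antipodal condition on $M_T$ and the smoothness of $Tx_0$.
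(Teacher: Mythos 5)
Your proof is correct, but it takes a genuinely different route from the paper's. The paper proves sufficiency by invoking a known norm-attainment characterization of operator orthogonality (when $M_T=\{x_0,-x_0\}$, one has $T\perp_BT'\iff Tx_0\perp_BT'x_0$) together with James' criterion that smoothness is equivalent to right-additivity of Birkhoff--James orthogonality; it proves necessity by directly exhibiting two distinct support functionals $T'\mapsto f(T'x)$, $T'\mapsto g(T'y)$ when either condition fails. You instead work entirely on the dual side: identifying $\mathbb{B}(\mathbb{X},\mathbb{Y})^*$ with $\mathbb{X}\otimes_\pi\mathbb{Y}^*$, observing that the set $F_T$ of support functionals of $T$ is a face of the dual ball, and using Milman's partial converse to Krein--Milman to conclude that $\mathrm{ext}(F_T)\subseteq\{x\otimes g: x\in M_T,\ g\ \text{a support functional of}\ Tx\}$, whence smoothness is read off as $F_T$ being a singleton. (Two small wording points: your claim that the extreme points of $F_T$ are \emph{exactly} these elementary tensors is an overstatement---$x\otimes g$ need not be extreme when $g$ is a non-extreme support functional of $Tx$---but only the inclusion is used; and in the necessity step you do not need $\phi_{x_1,g_1}$ to be extreme, only that it lies in the singleton $F_T$.) The paper's route is shorter given the cited operator-orthogonality result; yours is self-contained modulo standard tensor-norm duality and yields strictly more, namely the full description $F_T=\conv\{x\otimes g: x\in M_T,\ g\ \text{a support functional of}\ Tx\}$, which in particular recovers the characterization $T\perp_BS\iff 0\in\conv\{g(Sx)\}$ that the paper only establishes later, in the complex case, by a separate argument.
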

\begin{proof}
For proving the sufficiency, note that if $M_T=\{x_0,-x_0\}$, then by \cite[Corollary 2.2.1]{Sain2}, $T\perp_BT'$ for some $T'\in\mathbb{B}(\mathbb{X},\mathbb{Y})$ if and only if $Tx_0\perp_BT'x_0$. Hence by smoothness of $Tx_0$, we get, for any $T',T''\in\mathbb{B}(\mathbb{X},\mathbb{Y})$, 
\begin{align*}
    T\perp_BT',~T\perp_BT''&\Leftrightarrow Tx_0\perp_BT'x_0,~Tx_0\perp_BT''x_0 \\
    &\Rightarrow Tx_0\perp_B(T'x_0+T''x_0)\Leftrightarrow T\perp_B(T'+T'').
\end{align*}
For proving the necessity, note that if $x\in M_T$ and $Tx\in\mathbb{Y}$ is not smooth, then there exist $f,g$ distinct support functionals of $Tx$ giving two distinct support functionals $\Psi$ and $\Phi$ of $T$ via
\begin{align*}
    \Psi(T'):=f\left(T'x\right),~\Phi(T'):=g\left(T'x\right),~T'\in\mathbb{B}(\mathbb{X},\mathbb{Y}).
\end{align*}
Also if $M_T$ contains two linearly independent points $x$ and $y$, then consider support functionals $f$ and $g$ (not necessarily distinct) of $Tx$ and $Ty$ respectively to get two distinct support functionals $\Psi$ and $\Phi$ of $T$ via
\begin{align*}
    \Psi(T'):=f\left(T'x\right),~\Phi(T'):=g\left(T'y\right),~T'\in\mathbb{B}(\mathbb{X},\mathbb{Y}).
\end{align*}
\end{proof}
We now prove the relationship between the left-symmetric points and smoothness in $\mathbb{B}(\ell_\infty^n,\ell_1^m)$.
\begin{theorem}\label{leftsym}
A non-zero element $T\in\mathbb{B}(\ell_\infty^n,\ell_1^m)$ over $\mathbb{R}$ is smooth if it is left-symmetric.
\end{theorem}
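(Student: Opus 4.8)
The plan is to deduce smoothness from the characterization in Theorem~\ref{smooth}: after normalizing $\|T\|=1$, it suffices to show that a non-zero left-symmetric $T$ attains its norm at a single point up to sign, $M_T=\{x_0,-x_0\}$, and that $Tx_0$ is a smooth point of $\ell_1^m$. Throughout I would use two facts about operator orthogonality with the finite-dimensional domain $\ell_\infty^n$: the elementary sufficiency direction that $Tz\perp_B Sz$ for some $z\in M_T$ implies $T\perp_B S$; and \cite[Corollary 2.2.1]{Sain2}, which for any $S$ with $M_S=\{y_0,-y_0\}$ gives the clean equivalence $S\perp_B R\Leftrightarrow Sy_0\perp_B Ry_0$. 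The probing operators will be rank-one maps $S=f\otimes w$, where $f\in\ell_1^n=(\ell_\infty^n)^*$ is a unit functional with strictly positive weights and sign pattern equal to a vertex $\epsilon$ of the cube; such an $f$ exposes $\epsilon$, so $M_{f\otimes w}=\{\epsilon,-\epsilon\}$ for any $w\neq 0$.

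The first and main step is to prove $M_T=\{x_0,-x_0\}$. Since $x\mapsto\|Tx\|$ is convex on the cube its maximum is attained at a vertex, and a convex function attaining its maximum at a relative-interior point of a face is constant on that face; hence if $M_T$ were not a single antipodal pair it would contain two linearly independent vertices $\epsilon,\epsilon'$. The key idea is that $\epsilon$ and $\epsilon'$ agree on a nonempty set of coordinates and disagree on a nonempty set, so I can choose the exposing functional $f$ of $\epsilon$ with its weight split equally across these two sets, forcing $f(\epsilon')=0$. Setting $u=T\epsilon$ and $S=f\otimes u$, the second vertex gives $S\epsilon'=f(\epsilon')u=0$, so $T\epsilon'\perp_B S\epsilon'$ trivially and therefore $T\perp_B S$. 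Left-symmetry then forces $S\perp_B T$, and since $M_S=\{\epsilon,-\epsilon\}$ this reads $u\perp_B u$ via \cite[Corollary 2.2.1]{Sain2}, which is impossible for $u\neq 0$. This contradiction yields $M_T=\{x_0,-x_0\}$ with $x_0$ a vertex.

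It then remains to show $u:=Tx_0$ is smooth in $\ell_1^m$. First I would transfer left-symmetry to the image: for any $w$ with $u\perp_B w$, the operator $S=f\otimes w$ with $f$ exposing $x_0$ satisfies $Sx_0=w$, so $Tx_0\perp_B Sx_0$ gives $T\perp_B S$, and left-symmetry together with \cite[Corollary 2.2.1]{Sain2} returns $w\perp_B u$; thus $u$ is a left-symmetric point of $\ell_1^m$. I would then prove the elementary lemma that a left-symmetric point of $\ell_1^m$ has full support and is hence smooth: if $u$ had a zero coordinate $j_0$, choose $w$ of full support with $w_j=\sgn(u_j)$ on the support of $u$ and a large positive entry at $j_0$; a support functional of $u$ can annihilate $w$ using its free coordinates off the support, so $u\perp_B w$, while the unique support functional $\sgn(w)$ of $w$ evaluates on $u$ to $\|u\|_1>0$, so $w\not\perp_B u$, contradicting left-symmetry of $u$. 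Hence $u=Tx_0$ is smooth, and Theorem~\ref{smooth} gives that $T$ is smooth.

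I expect the only delicate point to be the norm-attainment step, since operator Birkhoff-James orthogonality is governed entirely by $M_T$ and so everything hinges on pinning this set down first; the mechanism that makes it work is the freedom to select an exposing functional annihilating a putative second norm-attaining vertex, collapsing the probe on that vertex. The transfer to $\ell_1^m$ and the full-support computation are then routine.
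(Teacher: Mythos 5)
Your argument is correct and follows essentially the same route as the paper's proof: probe $T$ with operators $S$ satisfying $M_S=\{x_0,-x_0\}$, invoke \cite[Corollary 2.2.1]{Sain2} to pin down $M_T=\{x_0,-x_0\}$ and to transfer left-symmetry to $Tx_0\in\ell_1^m$, and then conclude via Theorem~\ref{smooth}. The only substantive differences are cosmetic improvements: you realize the probes explicitly as rank-one operators $f\otimes w$ with $f$ an exposing functional chosen to annihilate a putative second norm-attaining vertex, and you prove directly that a left-symmetric point of $\ell_1^m$ has full support and is hence smooth, where the paper simply cites \cite{CSS} for that fact.
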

\begin{proof}
Let $x\in M_T$ and assume that $Tx$ is not a left-symmetric point of $\ell_1^m$. Then find $y\in\mathbb{Y}$ such that $Tx\perp_By$ and $y\not\perp_BTx$. Construct $T'\in\mathbb{B}(\ell_\infty^n,\ell_1^m)$ such that $M_{T'}=\{x,-x\}$ and $T'x=y$. Hence by \cite[Corollary 2.2.1]{Sain2}, we get $T\perp_BT'$ and $T'\not\perp_BT$. Thus $Tx$ must be a smooth point of $\mathbb{Y}$, whenever $x\in M_T$.\par
Now, if $M_T$ contains more than two points, then $M_T$ must contain at least two extreme points. Since $x\perp_By$ for any two linearly independent extreme points $x,y\in M_T$, we can find $T'\in\mathbb{B}(\mathbb{X},\mathbb{Y})$ such that $M_{T'}=\{x,-x\}$, $T'x=Tx$ and $T'y=0$ for every extreme point $y\in M_T$ linearly independent to $x$. Hence again by \cite[Corollary 2.2.1]{Sain2}, $T\perp_BT'$ and $T'\not\perp_BT$. \par
Thus $M_T=\{x,-x\}$ and $Tx$ is a left-symmetric point of $\ell_1^m$. However, from \cite{CSS}, we get that every left-symmetric point of $\ell_1^m$ is smooth and hence $T$ must be smooth by Theorem \ref{smooth}.
\end{proof}
We now come to our main result, where we relate the notion of right-symmetry of Birkhoff-James orthogonality with extreme contractions. Recall from \cite{iwa} that these are simply defined to be the extreme points of the closed unit ball in $\mathbb{B}(\ell_\infty^n,\ell_1^m)$.
\begin{proof}[Proof of Theorem \ref{main} for the real case]\hfill
\\
Let us denote $\ell_\infty^n$ and $\ell_1^m$ by $\mathbb{X}$ and $\mathbb{Y}$ respectively. Begin by observing that if $M_T$ contains $n$ linearly independent extreme points of $\ell_\infty^n$, then $T$ is an extreme contraction since if $T=\frac{1}{2}(T_1+T_2)$ for some $T_1,T_2\in\mathbb{B}(\mathbb{X},\mathbb{Y})$ having norm 1, then $Tx=\frac{1}{2}(T_1x+T_2x)$ for every extreme point $x$ of $X$ contained in $M_T$, giving $T_1x=T_2x=Tx$. Since there are $n$-many linearly independent $x$ in $M_T$, clearly $T$, $T_1$ and $T_2$ agree on $\mathbb{X}$ proving $T$ to be an extreme contraction.\par
Now, assume the contrary. As $T$ is not an extreme contraction, $T$ cannot attain norm at more than $n-1$ linearly independent extreme points. Let $T$ attain norm at $x_1,x_2,\dots,x_k$, which are linearly independent extreme points of $\mathbb{X}$. Extend $\{x_1,x_2,\dots,x_k\}$ to a basis $\{x_1,x_2,\dots,x_n\}$ of $\mathbb{X}$ consisting of extreme points. Also, by composing $T$ with a suitable signed permutation operator, we can assume that $x_i=\sum\limits_{j=1}^ne_j-2e_i$, where $e_i$ denotes the $i$-th standard basis vector. \par
Define a map $\Gamma:\mathbb{B}(\mathbb{X},\mathbb{Y})\to\mathbb{Y}_\infty^n$ given by
\begin{align*}
    \Gamma(T'):=(T'x_1,T'x_2,\dots,T'x_n),~T'\in\mathbb{B}(\mathbb{X},\mathbb{Y}).
\end{align*}
Clearly, $\Gamma$ is a bijective bounded linear map having norm 1 and it attains norm at $T$. Now, let $T\perp_BT'$, for some $T'\in\mathbb{B}(\mathbb{X},\mathbb{Y})$. Then by \cite[Theorem 2.2]{Sain2}, there exist $x,x'\in M_T$ such that 
\begin{align*}
    \|Tx+\lambda T'x\|\geq\|Tx\|,~\|Tx'-\lambda T'x'\|\geq\|Tx'\|,~\textit{for every}~\lambda\geq0.
\end{align*}
Now, if $\|Tx+\lambda T'x\|\geq\|Tx\|$ for every $\lambda\geq0$ then either $Tx\perp_BT'x$ or $[Tx,T'x]>0$ for every semi-inner product $[\cdot,\cdot]$ on $\mathbb{Y}$ since otherwise, there exists $\delta>0$ such that $f(T'x)\leq-\delta$ for every support functional $f$ of $Tx$. However, this means that for any support functional $f$ of $Tx$,
\begin{align*}
    \|Tx\|=f(Tx)\geq f(Tx+\lambda T'x)-\delta \lambda.
\end{align*}
Since the support functionals of $Tx$ are limit points of the set of support functionals of $Tx+\lambda T'x$ for every $\lambda>0$, this violates $\|Tx+\lambda T'x\|\geq\|Tx\|$ for every $\lambda\geq0$.\par
Now, if $x\in M_T$, then $x$ lies in the convex hull of $\{x_1,x_2,\dots,x_k\}$. If $S$ is the smallest subset of $\{x_1,x_2,\dots,x_k\}$, such that $x\in\conv(S)$, then $Tx=Tx_i$ for every $x_i\in S$. Hence if $Tx\perp_BT'x$ and $x=\sum\limits_{x_i\in S}\lambda_ix_i$, $\lambda_i>0$, $\sum_i\lambda_i=1$, then find a support functional $f$ of $Tx$ such that $f(T'x)=0$. Since $f$ is a support functional of $Tx_i$ for every $x_i\in S$, 
\begin{align*}
    \Psi:y\mapsto \sum\limits_{x_i\in S}\lambda_if\left(y_i\right),~y=(y_1,y_2,\dots,y_n)\in\mathbb{Y}_\infty^n,
\end{align*}
is a support functional of $\Gamma(T)$ by Proposition \ref{support}, such that $f\left(\Gamma(T')\right)=0$. Also, if $[Tx,T'x]>0$, then $[Tx_i,T'x]>0$ for every $x_i\in S$. Hence 
\begin{align*}
    \sum\limits_{x_i\in S}\lambda_i[T'x_i,Tx_i]>0,
\end{align*}
giving $[Tx_i,T'x_i]>0$ for some $x_i\in S$. Similarly, if $[Tx,T'x]<0$, then $[Tx_i,T'x_i]<0$ for some $x_i\in S$. Now, as $T\perp_BT'$, $Tx\perp_BT'$ for some $x\in M_T$ or $[Tx,T'x]>0$ and $[Ty,T'y]<0$ for every semi-inner product $[\cdot,\cdot]$ on $\mathbb{X}$ for some $x,y\in M_T$. In either case, by Theorem \ref{ortho}, $\Gamma(T)\perp_B\Gamma(T')$.\par
Now, by Theorem \ref{rightsym}, $\Gamma(T)$ is not a right-symmetric point of $\mathbb{Y}_\infty^n$. Further, by Remark \ref{choice}, we can find $y=(y_1,y_2,\dots,y_n)\in\mathbb{Y}_\infty^n$ such that $\Gamma(T)\not\perp_B y$ and $y\perp_B\Gamma(T)$, with $\|y_n\|=\|y\|$ and $\|y_i\|$ arbitrarily small for $i<n$. Now let $x$ be any extreme point of $\mathbb{X}$. Then there exists a partition $(A,B)$ of $\{1,2,\dots,n\}$ such that $x=\sum\limits_{i\in A}e_i-\sum\limits_{i\in B}e_i$. Then if $A=\emptyset$ or $B=\emptyset$,
\begin{align*}
    T'x=\frac{1}{n-2}\sum\limits_{i=1}^nT'x_i, ~\textit{for every}~T'\in\mathbb{B}(\mathbb{X},\mathbb{Y}).
\end{align*}
Else if $|A|,|B|\geq2$, then  
\begin{align*}
    T'x=\frac{|A|-|B|}{2(n-2)}\sum\limits_{i=1}^nT'x_i+\frac{1}{2}\left(\sum\limits_{i\in A}T''x_i-\sum\limits_{i\in B}Tx_i\right),~\forall~T'\in\mathbb{B}(\mathbb{X},\mathbb{Y}).
\end{align*}
Since $n\geq4$, by choosing $\|y_i\|$ sufficiently small, we can ensure that $\Gamma^{-1}(y)$ attains norm only at $\{x_n,-x_n\}$.\par
Therefore, by \cite[Corollary 2.2.1]{Sain2}, $\Gamma^{-1}(y)\perp_BT$. However, as $\Gamma(T)\not\perp_By$, we have $T\not\perp_B\Gamma^{-1}(y)$, violating the right-symmetry of $T$.
\end{proof}

\section{Analysis for the complex case}\label{4}

Our goal in this final section is to prove Theorem \ref{main} over the complex numbers. For analyzing the geometry of the space of operators $\mathbb{B}(\ell_\infty^n,\ell_1^m)$ over $\mathbb{C}$, we follow the same procedure as before. It is trivial to note that the proof of Proposition \ref{support} holds for the space $\mathbb{X}_\infty^n$ over the complex numbers as well. We now restate Theorem \ref{ortho} for the complex case:
\begin{theorem}\label{orthocomplex}
Given $x=(x_1,x_2,\dots,x_n)$ and $y=(y_1,y_2,\dots,y_n)$ elements of $\mathbb{X}_\infty^n$, $x\perp_By$ if and only if 
\begin{align*}
    0\in\conv\{[y_i,x_i]:\|x_i\|=\|x\|,~[\cdot,\cdot]~\textit{semi-inner product on}~\mathbb{X}\}.
\end{align*}
\end{theorem}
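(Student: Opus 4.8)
The plan is to follow the template of the proof of Theorem \ref{ortho}, but to phrase everything through the convex-hull condition from the outset, since over $\mathbb{C}$ the values $[y_i,x_i]$ lie in $\mathbb{C}\cong\mathbb{R}^2$ rather than in $\mathbb{R}$, so the clean two-case split (some $[y_i,x_i]=0$, or a pair of opposite signs) is no longer available and must be replaced by the single statement $0\in\conv\{\dots\}$. The first step is the bookkeeping that identifies semi-inner products with support functionals: for each index $i$ with $\|x_i\|=\|x\|$, as $[\cdot,\cdot]$ ranges over all semi-inner products on $\mathbb{X}$, the quantity $[y_i,x_i]$ ranges exactly over $\{\|x\|\,g(y_i): g\text{ a support functional of }x_i\}$. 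Indeed, given a support functional $g$ of $x_i$ one builds a semi-inner product with $[z,x_i]=\|x_i\|\,g(z)$ by choosing $\Psi([x_i])=g$, and conversely every semi-inner product arises this way. Writing $J(x_i)$ for the (convex, weak*-compact) set of support functionals of $x_i$, the target condition thus becomes $0\in\conv\bigcup_{\|x_i\|=\|x\|}\{g(y_i): g\in J(x_i)\}$, the positive scalar $\|x\|$ being irrelevant to membership of $0$ in a convex hull.

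For the necessity I would invoke James' criterion together with the complex version of Proposition \ref{support}: $x\perp_B y$ means some support functional $\Psi_{(f_1,\dots,f_n)}$ of $x$ annihilates $y$, and Proposition \ref{support} forces $f_i=\lambda_i g_i$ with $\lambda_i\geq 0$, $\sum_i\lambda_i=1$, and $g_i\in J(x_i)$ when $\|x_i\|=\|x\|$, and $f_i=0$ otherwise. The equation $\sum_{\|x_i\|=\|x\|}\lambda_i g_i(y_i)=0$ then exhibits $0$ as a convex combination of the points $g_i(y_i)=\|x\|^{-1}[y_i,x_i]$, which is equivalent to $0\in\conv S$, where $S$ is the set in the statement. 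For the sufficiency I would reverse this: writing $0=\sum_k t_k[y_{i_k},x_{i_k}]$ as a convex combination of elements of $S$, I convert each term to $\|x\|\,g_{i_k}(y_{i_k})$ for a support functional $g_{i_k}\in J(x_{i_k})$, and then assemble from these a single support functional of $x$ that vanishes on $y$, so that James' criterion yields $x\perp_B y$.

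The point needing genuine care — and the place where the complex case diverges most from the real one — is the assembly step in the sufficiency direction. A convex combination witnessing $0\in\conv S$ may attach several different support functionals $g_{i_k}$ to the same coordinate $i$ (over $\mathbb{R}$, Carathéodory permits at most two points and they can be separated by coordinate, whereas over $\mathbb{C}\cong\mathbb{R}^2$ up to three points are needed, forcing coincidences of indices). To produce the single functional demanded by Proposition \ref{support}, for each fixed $i$ I would merge the functionals sharing that coordinate, setting $\Lambda_i=\sum_{k:i_k=i}t_k$ and $g_i'=\Lambda_i^{-1}\sum_{k:i_k=i}t_k g_{i_k}$; convexity of $J(x_i)$ guarantees $g_i'\in J(x_i)$, and linearity gives $\sum_i\Lambda_i g_i'(y_i)=0$ with $\Lambda_i\geq 0$ and $\sum_i\Lambda_i=1$. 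Taking $f_i=\Lambda_i g_i'$ where $\|x_i\|=\|x\|$ and $f_i=0$ otherwise then yields, via Proposition \ref{support}, a support functional of $x$ killing $y$. I expect this merging step — reducing an arbitrary planar convex combination back to one support functional per active coordinate — to be the main technical obstacle, although it is resolved cleanly by the convexity of $J(x_i)$.
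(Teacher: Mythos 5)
Your proof is correct and follows essentially the same route as the paper: both directions translate between semi-inner products and the support functionals of the active coordinates, then invoke Proposition \ref{support} together with James' criterion. Your merging step (using convexity of the set $J(x_i)$ of support functionals to collapse several semi-inner products attached to the same coordinate into a single one) addresses a point that the paper's sufficiency argument passes over silently, since the paper writes the convex combination with exactly one semi-inner product per index; this is a welcome tightening rather than a divergence.
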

\begin{proof}
For the sufficiency, observe that if 
\begin{align*}
    \sum\limits_{k=1}^n\lambda_k[x_k,y_k]_k=0,
\end{align*}
for $\|x_k\|=\|x\|$ and $[\cdot,\cdot]_k$ semi-inner products on $\mathbb{X}$ for $1\leq k\leq n $, then define a support functional of $x$ using Proposition \ref{support} as:
\begin{align*}
    z=(z_1,z_2,\dots,z_n)\mapsto \frac{1}{\|x\|}\sum\limits_{k=1}^n\lambda_k[x_k,z_k]_k,~z\in\mathbb{X}_\infty^n.
\end{align*}
Clearly, the image of $y$ under this support functional is zero, giving $x\perp_By$.\par
For the necessity, assume $x\perp_By$ and consider a support functional of $x$ given by $f=(f_1,f_2,\dots,f_n)$ such that $f(y)=0$. Construct semi-inner products by extending:
\begin{align*}
    [x_i,z_i]_i:=\|x\|g_i(z_i),
\end{align*}
whenever, $f_i=\lambda_i g_i$ with $g_i$ a support functional of $x_i$ and $\lambda_i>0$ as in Proposition \ref{support}. Clearly,
\begin{align*}
    \sum\limits_i\lambda_i[y_i,x_i]_i=0,
\end{align*}
finishing the proof.
\end{proof}
Note that Corollary \ref{smoothinf} holds for the complex case as can be deduced from Theorem \ref{orthocomplex}. The proof of Theorem \ref{rightsym} also holds over $\mathbb{C}$, where positivity of $[y_i,x_i]$ for every semi-inner product is replaced by $\left\{[y_i,x_i]:[\cdot,\cdot]~\text{semi-inner product on}~\mathbb{X}\right\}$ being a compact convex subset of the right half-plane.\par
Theorem \ref{smooth} is restated as:
\begin{theorem}
Given finite-dimensional Banach spaces $\mathbb{X}$ and $\mathbb{Y}$ over $\mathbb{C}$, $T\in\mathbb{B}(\mathbb{X},\mathbb{Y})$ is smooth if and only if $M_T={e^{i\theta}x_0:0\leq\theta<2\pi}$ for some $\|x_0\|=1$ and $Tx_0$ is a smooth point of $\mathbb{Y}$.
\end{theorem}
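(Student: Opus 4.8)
The plan is to mirror the real proof of Theorem~\ref{smooth}, accounting for the single structural difference between the two settings: since $\|e^{i\theta}x\|=\|x\|$ and $\|T(e^{i\theta}x)\|=\|Tx\|$, the norm-attaining set $M_T$ is automatically invariant under multiplication by unimodular scalars, so the correct complex analog of the real condition $M_T=\{x_0,-x_0\}$ is that $M_T$ be a single complex circle $\{e^{i\theta}x_0:0\le\theta<2\pi\}$, i.e.\ the unit sphere of a one-dimensional complex subspace. With this dictionary in place, the two implications proceed essentially as before.

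For the sufficiency I would first record that, when $M_T=\{e^{i\theta}x_0\}$, Birkhoff-James orthogonality of operators reduces to orthogonality of the images at $x_0$: namely $T\perp_BT'$ if and only if $Tx_0\perp_BT'x_0$. This is the complex counterpart of \cite[Corollary 2.2.1]{Sain2}, and I would either invoke the complex version of that result or re-derive it from the complex operator-orthogonality characterization (the complex analog of \cite[Theorem 2.2]{Sain2}), noting that a single norm-attaining line leaves no room for the ``two-point cancellation'' branch. Granting this reduction, the right-additivity of $\perp_B$ at the smooth point $Tx_0$ transfers verbatim to $T$:
\begin{align*}
    T\perp_BT',~T\perp_BT''&\Leftrightarrow Tx_0\perp_BT'x_0,~Tx_0\perp_BT''x_0\\
    &\Rightarrow Tx_0\perp_B(T'x_0+T''x_0)\Leftrightarrow T\perp_B(T'+T''),
\end{align*}
so $T$ is smooth. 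Equivalently, and perhaps more transparently, one can argue that every support functional of $T$ has the form $T'\mapsto f(T'x_0)$ for some support functional $f$ of $Tx_0$, so that uniqueness of $f$ (smoothness of $Tx_0$) forces uniqueness of the support functional of $T$.

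For the necessity I would prove the contrapositive in two steps, exhibiting two distinct support functionals of $T$ in each failure mode. First, if $Tx$ is not smooth for some $x\in M_T$, pick distinct support functionals $f\neq g$ of $Tx$; then $\Psi(T'):=f(T'x)$ and $\Phi(T'):=g(T'x)$ are distinct support functionals of $T$. Second, if $M_T$ contains two points $x,y$ that are linearly independent over $\mathbb{C}$ (i.e.\ $y\notin\mathbb{C}x$), choose support functionals $f$ of $Tx$ and $g$ of $Ty$ and set $\Psi(T'):=f(T'x)$, $\Phi(T'):=g(T'y)$; in finite dimensions one can select $T'$ with $T'x=0$ and $g(T'y)\neq0$, which separates $\Psi$ from $\Phi$. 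In both cases $\Psi$ and $\Phi$ are genuine support functionals, because $|f(T'x)|\le\|T'x\|\le\|T'\|$ while $f(Tx)=\|Tx\|=\|T\|$, exactly as over $\mathbb{R}$.

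The step I expect to be the main obstacle is securing the complex analog of \cite[Corollary 2.2.1]{Sain2} (and behind it the complex operator-orthogonality characterization), since the cancellation phenomenon in the complex plane is richer than the sign argument used over $\mathbb{R}$---this is precisely where Theorem~\ref{orthocomplex} replaced the opposite-sign condition of Theorem~\ref{ortho} by the condition $0\in\conv\{\cdots\}$. A secondary point to verify is that James' smooth-iff-right-additive criterion is available over $\mathbb{C}$; if one prefers to avoid it, the support-functional representation sketched above gives a self-contained route to sufficiency that sidesteps the issue entirely.
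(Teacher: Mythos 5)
Your proposal is correct and takes essentially the same route as the paper, which simply asserts that the real-case proof of Theorem~\ref{smooth} (reduction of operator orthogonality to orthogonality at $x_0$ via the analog of \cite[Corollary 2.2.1]{Sain2} for sufficiency, and explicit construction of two distinct support functionals $T'\mapsto f(T'x)$, $T'\mapsto g(T'y)$ for necessity) carries over verbatim to $\mathbb{C}$. You are in fact more careful than the paper on the two points you flag --- justifying the complex analog of the reduction lemma and exhibiting a $T'$ that separates $\Psi$ from $\Phi$ when $x$ and $y$ are linearly independent --- both of which the paper leaves implicit.
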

The proof for Theorem \ref{smooth} holds verbatim in this complex case. It should be noted that these results when applied for the $\mathbb{X}-\mathbb{C}$ case indeed gives the results pertaining to the Birkhoff-James orthogonality in $\ell_\infty$ that were established in \cite{CSS}.\par
Theorems \ref{main} and \ref{leftsym} also hold in the complex case, but for proving them, we need the characterization of Birkhoff-James orthogonality in the space of operators over the complex field.
\begin{theorem}
Given finite-dimensional Banach spaces $\mathbb{X}$ and $\mathbb{Y}$ over $\mathbb{C}$ and $T,S\in\mathbb{B}(\mathbb{X},\mathbb{Y})$, $T\perp_BS$ if and only if
\begin{align*}
    0\in\conv\{[Sx,Tx]:x\in M_T,~[\cdot,\cdot]~\text{semi-inner product on}~\mathbb{X}\}.
\end{align*}
\end{theorem}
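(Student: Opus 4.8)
The plan is to reduce everything to James' characterization together with an analysis of the support functionals of $T$ through its norm-attaining set $M_T$, mirroring the real case but tracking the complex phase. Normalize so that $\|T\|=1$; then $\|Tx\|=1$ for every $x\in M_T$. The sufficiency is the easy half. Given a representation $0=\sum_k\mu_k[Sx_k,Tx_k]_k$ with $x_k\in M_T$, $\mu_k\geq0$, $\sum_k\mu_k=1$, and $[\cdot,\cdot]_k$ semi-inner products on $\mathbb{Y}$ (the only space on which $[Sx,Tx]$ makes sense, since $Sx,Tx\in\mathbb{Y}$), I would set $\Phi(A):=\sum_k\mu_k[Ax_k,Tx_k]_k$ for $A\in\mathbb{B}(\mathbb{X},\mathbb{Y})$. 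Linearity of a semi-inner product in its first slot makes $\Phi$ a linear functional, and $\Phi(T)=\sum_k\mu_k\|Tx_k\|^2=1$ while $|\Phi(A)|\leq\sum_k\mu_k\|Ax_k\|\,\|Tx_k\|\leq\|A\|$, so $\Phi$ is a support functional of $T$ with $\Phi(S)=0$. James' theorem then gives $T\perp_BS$.

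The substance is the necessity, which I would prove by contraposition via a separation-plus-perturbation argument. Put $K:=\conv\{[Sx,Tx]:x\in M_T,\ [\cdot,\cdot]\text{ a semi-inner product on }\mathbb{Y}\}\subseteq\mathbb{C}$ and suppose $0\notin K$. I would first verify that $K$ is compact and convex: for fixed $x\in M_T$ the set $\{[Sx,Tx]\}=\{g(Sx):g\in J(Tx)\}$ is the continuous image of the weak* compact convex set $J(Tx)$ of support functionals of $Tx$, and upper semicontinuity of $x\mapsto J(Tx)$ over the compact set $M_T$ makes the union of these sets compact; its convex hull is then compact as it lives in $\mathbb{R}^2$. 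Since $0\notin K$, a separating-hyperplane argument produces $\theta\in[0,2\pi)$ and $\delta>0$ with $\mathrm{Re}\!\left(e^{i\theta}[Sx,Tx]\right)\geq\delta$ for every $x\in M_T$ and every semi-inner product on $\mathbb{Y}$.

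Next I would invoke the one-sided derivative of the norm: for $w,v\in\mathbb{Y}$ with $\|w\|=1$, the right derivative of $t\mapsto\|w+tv\|$ at $t=0$ equals $\max_{g\in J(w)}\mathrm{Re}\,g(v)$. Taking $w=Tx$, $v=-e^{i\theta}Sx$, and using $g(Sx)=[Sx,Tx]$ for $g\in J(Tx)$ (valid since $\|Tx\|=1$), the separation estimate shows that for each $x\in M_T$ the function $t\mapsto\|Tx-te^{i\theta}Sx\|$ has right derivative at $0$ bounded above by $-\delta$. The final step is to upgrade this pointwise strict decrease into a decrease of the operator norm: writing $\|T-te^{i\theta}S\|=\max_{\|x\|\leq1}\|Tx-te^{i\theta}Sx\|$ and using that $\|Tx\|$ is bounded away from $1$ off any neighbourhood of $M_T$ (by compactness of the unit sphere of $\mathbb{X}$), a uniformity argument yields $\|T-te^{i\theta}S\|<1=\|T\|$ for all sufficiently small $t>0$. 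Setting $\lambda=-te^{i\theta}$ gives $T\not\perp_BS$, which is the contrapositive.

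I expect the main obstacle to be precisely this last uniformity step: the derivative estimate only controls the directions $x\in M_T$, so one must rule out that the operator norm of $T-te^{i\theta}S$ is attained at points drifting toward $M_T$ as $t\to0^+$, which requires a careful equicontinuity or compactness argument. An alternative that would sidestep the perturbation analysis is to prove directly the operator analogue of Proposition~\ref{support} --- that every support functional of $T$ has the convex-combination form $A\mapsto\sum_k\mu_k[Ax_k,Tx_k]_k$ used in the sufficiency half --- after which the equivalence would follow immediately from James' criterion; establishing that subdifferential formula in full generality is itself the delicate point. I would also need to state cleanly the complex one-sided derivative formula and the semicontinuous dependence of $J(Tx)$ on $x$, both routine in finite dimensions but essential for the compactness of $K$.
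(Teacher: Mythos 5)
Your sufficiency half coincides with the paper's: both construct the support functional $A\mapsto\sum_k\mu_k[Ax_k,Tx_k]_k$ and invoke James' criterion (and you are right that the semi-inner products must live on $\mathbb{Y}$, not $\mathbb{X}$ as the statement misprints). The necessity is where you genuinely diverge. The paper isometrically embeds $\mathbb{B}(\mathbb{X},\mathbb{Y})$ into $C(S_\mathbb{X},\mathbb{Y})$, identifies the dual of the latter via the Riesz representation theorem with $\mathbb{Y}^*$-valued regular Borel measures, and shows that any support functional of $T$ is represented by a measure concentrated on $M_T$ whose density is a field of support functionals of $Tx$; the desired membership of $0$ in the convex hull is then read off from an annihilating support functional (the integral over $M_T$ being an average of values $[Sx,Tx]$, which in $\mathbb{C}\cong\mathbb{R}^2$ reduces to a finite convex combination). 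You instead argue by contraposition: separate $0$ strictly from the compact convex set $K$ and use the one-sided derivative of the sup-norm --- a Danskin-type max-function formula --- to produce $\lambda=-te^{i\theta}$ with $\|T+\lambda S\|<\|T\|$. Both routes are valid in finite dimensions. The paper's buys a full description of the support functionals of $T$ (more than the equivalence requires) at the cost of the vector-measure machinery; yours is more elementary and self-contained, at the cost of the uniformity step you correctly flag: upgrading the derivative estimate on the directions in $M_T$ to a strict decrease of the operator norm needs the upper semicontinuity of $x\mapsto J(Tx)$ together with the fact that $\|Tx\|$ is bounded away from $\|T\|$ off any neighbourhood of $M_T$. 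That argument is standard in finite dimensions (it is the same mechanism underlying the real-case operator criterion of Sain that the paper cites), so your sketch is sound, but to be a complete proof it would need that compactness/semicontinuity argument written out, which the paper's measure-theoretic route sidesteps.
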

\begin{proof}
For the sufficiency, note that if
\begin{align*}
    \sum\limits_{i=1}^k\lambda_i[Sx_i,Tx_i]_i=0,
\end{align*}
then 
\begin{align*}
    S'\mapsto\sum\limits_{i=1}^k\lambda_i[S'x_i,Tx_i]_i,~S'\in\mathbb{B}(\mathbb{X},\mathbb{Y}),
\end{align*}
is a support functional of $T$ that annihilates $S$. \par
For the necessity, note that we can embed $\mathbb{B}(\mathbb{X},\mathbb{Y})$ within $C(S_\mathbb{X},\mathbb{Y})$, the Banach space of all $\mathbb{Y}$ valued continuous functions on $S_\mathbb{X}$ equipped with the supremum norm. Now, applying the Riesz representation theorem, since $\dim(\mathbb{Y})$ is finite, the dual of $C(S_\mathbb{X},\mathbb{Y})$ is given by the space of all $\mathbb{Y}^*$ valued regular Borel measures on $S_\mathbb{X}$ equipped with the total variation norm.\par
Elementary computation yields the characterization of the support functional of a non-zero element $F\in C(S_\mathbb{X},\mathbb{Y})$ as $\mu:S_\mathbb{X}\to\mathbb{Y}^*$ such that
\begin{align*}
    \int\limits_{X}Gd\mu=\int\limits_{M_F}\Psi(x)\left(G(x)\right)d|\mu|(x),~G\in C(S_\mathbb{X},\mathbb{Y}),
\end{align*}
where $\Psi(x)$ is some support functional of $F(x)$ for $x\in M_F$, the set of points in $S_\mathbb{X}$ where $F$ attains its norm and $|\mu|$ is the total variation of the measure $\mu$ with $|\mu|(M_F)=|\mu|(S_\mathbb{X})=1$.\par
Hence if $T\perp_BS$ in $\mathbb{B}(\mathbb{X},\mathbb{Y})$, then $T\perp_BS$ as elements of $C(S_\mathbb{X},\mathbb{Y})$ as well. However, the characterization of the support functionals in $C(S_\mathbb{X},\mathbb{Y})$ gives that $T\perp_BS$ in $C(S_\mathbb{X},\mathbb{Y})$ implies
\begin{align*}
    0\in\conv\{[Tx,Sx]:x\in M_T,~[\cdot,\cdot]~\text{semi-inner product on}~\mathbb{X}\}.
\end{align*}
The proof is similar to the proof of \cite[Theorem 1.1.2]{me}.
\end{proof}
This characterization of Birkhoff-James orthogonality in $\mathbb{B}(\mathbb{X},\mathbb{Y})$ immediately allows us to prove Theorem \ref{leftsym} for the complex case using exactly the same arguments.\par
We finish by proving Theorem \ref{main} for the complex case.
\begin{proof}[Proof of Theorem \ref{main} for the complex case]\hfill
\\
Begin by noting that given any complex Banach space $\mathbb{X}$, and $x,y\in\mathbb{X}$, if $x\not\perp_By$ over $\mathbb{R}$, i.e., $\|x+\lambda y\|<\|x\|$ for some real $\lambda$, then $x\not\perp_By$ in $\mathbb{X}$. Now consider the complex Banach space $\mathbb{B}(\ell_\infty^n,\ell_1^m)$ and $T$ an element of this space, which is not an extreme contraction. Clearly, $T$ cannot attain its norm at $n$ many linearly independent extreme points, by the same argument as in the real case. Further, by composing the operator with a suitable signed permutation operator, we can force the operator to attain its norm at some proper subset of $\{x_i:x_i=\sum\limits_{j=1}^ne_j-2e_i\}$. Once again considering the map $\Gamma:\mathbb{B}(\ell_\infty^n,\ell_1^m)\to\mathbb{X}_\infty^n$ we can construct $y$ such that $y\perp_B\Gamma(T)$ in $\mathbb{R}$. Now suppose $T$ does not attain norm at $x_n$. Repeating the construction in the real case, we can produce $y=(y_1,y_2,\dots,y_n)$ is such a way that $\|y\|=\|y_n\|$ and $y_n\perp_BT(x_n)$ over $\mathbb{R}$. Notice that this forces the existence of a support functional $f$ of $y_n$ having only real components such that $f(Tx_n)=0$. However, by construction, $y_n$ has only real components and therefore, any support functional of $y_n$ in $\mathbb{X}$ over $\mathbb{C}$ has only real components. Therefore, $f$ is in fact a support functional of $y_n$ in $\mathbb{X}$ over $\mathbb{C}$ such that $f(Tx_n)=0$, i.e., in particular, $y_n\perp_BT(x_n)$ in $\mathbb{X}$ and so, $y\perp_B\Gamma(T)$ in $\mathbb{X}_\infty^n$. Now considering $\Gamma^{-1}(y)$ as before, we get that $\Gamma^{-1}(y)\perp_BT$. However, from the proof of the real case, $T\not\perp_B\Gamma^{-1}(y)$ over $\mathbb{R}$ and therefore, on $\mathbb{C}$. This concludes the argument.

\end{proof}

\end{document}